\newcommand{\private}[1]{}
\newcommand{\bfn}[1]{}                          
\newcommand{\ifn}[1]{}      
\renewcommand\l@subsection{\@tocline{2}{0pt}{2pc}{5pc}{}}
\newcommand{\R}{{\mathbb R}}
\newcommand{\hofiber}{\operatorname{hofiber}}
\newcommand{\holim}{\operatorname{holim}}
\newcommand{\colim}{\operatorname{colim}}
\newcommand{\hocolim}{\operatorname{hocolim}}
\newcommand{\calF}{{\mathcal{F}}}
\newcommand{\calX}{{\mathcal{X}}}
\newcommand{\calY}{{\mathcal{Y}}}
\newcommand{\del}{{\partial}}
\theoremstyle{plain}
\newtheorem{thm}{Theorem}[section]
\newtheorem{lemma}[thm]{Lemma}
\theoremstyle{definition}
\newtheorem{defin}[thm]{Definition}
\newtheorem{def/ex}[thm]{Definition/Example}
\theoremstyle{remark}
\newcommand{\refT}[1]{Theorem~\ref{T:#1}}
\newcommand{\refL}[1]{Lemma~\ref{L:#1}}
\begin{document}


\title[Blakers-Massey Theorem for $n$-cubes]{A purely homotopy-theoretic proof of the Blakers-Massey Theorem for $n$-cubes}


\author{Brian A. Munson}
\address{Department of Mathematics, U.S. Naval Academy, Annapolis, MD}
\email{munson@usna.edu}

%


\begin{abstract}
Goodwillie's proof of the Blakers-Massey Theorem for $n$-cubes relies on a lemma whose proof invokes transversality. The rest of his proof follows from general facts about cubes of spaces and connectivities of maps. We present a purely homotopy-theoretic proof of this lemma. The methods are elementary, using a generalization and modification of an argument originally due to Puppe used to prove the Blakers-Massey Theorem for squares.
\end{abstract}

\maketitle


\parskip=4pt
\parindent=0cm


\section{Introduction}\label{S:Intro}



Homology has proven a useful tool because it is often computable and produces interesting invariants. In contrast, homotopy groups are usually very difficult to compute. From the standpoint of the Eilenberg-Steenrod axioms for a homology theory, the difference in the computational difficulty can be explained by the fact that homology satisfies excision while homotopy does not. However, homotopy groups satisfy excision through a range of dimensions. The most fundamental result in this direction is the Freudenthal Suspension Theorem, which gives a range of dimensions in which the homotopy groups of a highly connected space $X$ are the same as the stable homotopy groups of $X$, the latter of which satisfy excision.

Recall that a space $X$ is called $j$-connected if every map $\del D^{i+1}=S^i\to X$ extends to a map $D^{i+1}\to X$ for $-1\leq i\leq j$. A map $f:Y\to X$ is $j$-connected if for all $x\in X$, the homotopy fiber $\hofiber_x(f)=\{(y,\gamma) : \gamma:I \to X, \gamma(1)=f(y),\gamma(0)=x\}$ is $(j-1)$-connected. In terms of pairs, $f:Y\to X$ is $j$-connected if $\pi_i(X,Y)=0$ for $1\leq i\leq j$ and the induced map $\pi_0Y\to\pi_0X$ is surjective (here $X$ should be replaced with the mapping cylinder of $f$ so that $Y$ is a subspace).

Suppose $X$ is a $j$-connected based space. The suspension of the identity map of $X$ is adjoint to a map $X\to\Omega\Sigma X$, and the Freudenthal Suspension Theorem says that this map is $(2j+1)$-connected. In terms of homotopy groups, the induced map $\pi_i (X)\to\pi_{i+1}\Sigma X$ is an isomorphism for $i\leq 2j$ and onto for $i=2j+1$. The stable homotopy groups $\pi_i^S X$ are defined as $\colim_{n} \pi_{i+n} \Sigma^nX$, so that $\pi_i X\to\pi^S_i X$ is an isomorphism in the range indicated. Hence the low-dimensional homotopy groups of $X$ can be replaced by groups satisfying the excision axiom.

The Freudenthal Suspension Theorem is itself a special case of the Triad Connectivity Theorem, also known as the Blakers-Massey Theorem for squares, which says that if a space $Y$ is the union of connected subspaces $Y_1$ and $Y_2$ along their connected intersection $Y_\emptyset=Y_1\cap Y_2$, and if $\pi_i(Y_1,Y_\emptyset)=0$ for $1\leq i\leq j$ and $\pi_i(Y_2,Y_\emptyset)=0$ for $1\leq i\leq l$, then the excision map $\pi_i(Y_1,Y_\emptyset)\to \pi_i(Y, Y_2)$ is an isomorphism for $1\leq i\leq j+l-1$ and onto for $i=j+l$ (to obtain the Freudenthal Suspension Theorem let $Y_\emptyset=X$ be $j$-connected and $Y_1,Y_2$ be copies of the cone on $Y_\emptyset$). We say ``for squares'' because the theorem can be neatly and more symmetrically described by organizing the spaces into the square diagram
$$\xymatrix{
Y_\emptyset \ar[r] \ar[d] & Y_1\ar[d]\\
Y_2 \ar[r] & Y,
}
$$
and the result can be interpreted as a range of dimensions in which either of the maps of pairs $(Y_1,Y_\emptyset)\to (Y,Y_2)$ or $(Y_2,Y_\emptyset)\to(Y, Y_1)$ induces isomorphisms in homotopy (i.e., a range in which these groups satisfy excision). Another more symmetric way to say this is that the map $Y_\emptyset\to\holim(Y_1\to Y\leftarrow Y_2)$ is $(j+l-1)$-conneced. This has generalizations to higher-order excision; for example, where $Y$ is the union of many spaces $Y_i$ along a common subspace $Y_\emptyset$.

The Blakers-Massey Theorem for $k$-cubes, also known as the $(k+1)$-ad Connectivity Theorem, is a result giving a range of dimensions in which higher-order excision holds for homotopy groups. A \emph{$k$-cube} of topological spaces is a functor $\calX=T\mapsto X_T$ from the poset of subsets of $\{1,\ldots, k\}$ to the category of topological spaces. Thus a $2$-cube $\calX$ is a square
$$\calX=
\xymatrix{
X_\emptyset\ar[r]\ar[d] & X_1\ar[d]\\
X_2\ar[r] & X_{12}.
}
$$
We say such a square is \emph{$j$-cocartesian} if the canonical map $\hocolim(X_2\leftarrow X_\emptyset\to X_1)\to X_{12}$ is $j$-connected. When $j=\infty$, we say the square is \emph{homotopy cocartesian}. A $k$-cube $\calX=T\mapsto X_T$ is called \emph{strongly cocartesian} if all its square faces are homotopy cocartesian. We say $\calX$ is \emph{$j$-cartesian} if the canonical map $X_\emptyset\to\holim_{\emptyset\neq T\subset\{1,\ldots, k\}}X_T$ is $j$-connected. See Section 1 and Definitions 1.3, 1.4, and 2.1 of \cite{CalcII} for terminology, and Section 2 for more on higher-order excision, including the results discussed in the current work.

\begin{thm}[Ellis-Steiner \cite{ES:BM}]\label{T:B-M}
If $\calX=T\mapsto X_T$ is a strongly cocartesian $k$-cube and the maps $X_\emptyset\to X_{\{i\}}$ are $j_i$-connected for all $i\in\{1,\ldots, k\}$, then $\calX$ is $(1-k+\sum_{i=1}^k j_i)$-cartesian.
\end{thm}

The proof of the Blakers-Massey Theorem for $k$-cubes is originally due to Barratt and Whitehead \cite{BW:n+1ad} with the additional hypothesis that $j_i\geq 2$ for all $i$, and was later improved as above by Ellis and Steiner \cite{ES:BM}. In addition, Ellis and Steiner were able to compute the first non-trivial group of such cubes. Their techniques use $\mathrm{cat}^k$-groups, following Brown and Loday \cite{BL:Excision, BL:VanKampen, BL:Hurewicz}, and as a result the proofs require extra machinery and are algebraic in nature. At the expense of losing information about the first non-trivial group, one can use the simpler and more direct space-level proof due to Goodwillie \cite{CalcII}, who was also able to prove a generalization of \refT{B-M} to a wider class of cubes (Theorem 2.5 of \cite{CalcII}). As Goodwillie notes, a good deal of his proof of \refT{B-M} is quite formal, relying on general results about cubical diagrams and connectivities of maps. However, it relies on a lemma, stated below, which uses a transversality (dimension counting) argument, and as such depends on arguments from the smooth category. Our note aims to prove the lemma using only elementary homotopy theory, and alongside Goodwillie's formal arguments stands as a purely homotopy-theoretic and space-level proof of \refT{B-M}. As with proofs of excision for homology, our techniques utilize subdivision (see for instance \refL{tech-lemma} below).

For a positive integer $k$, let $\underline{k}=\{1,\ldots, k\}$. Let $\calX=T\mapsto X_T$ be a pushout cube of spaces, formed by attaching cells $e_j$ of dimension $d_j+1$ for $1\leq j\leq k$ to a space $X_\emptyset$. That is, $X_T=X_\emptyset\cup\{e_j : j\in T\}$ for $T\subset\underline{k}$ for some choice of attaching maps $\del e_j\to \calX(\emptyset)$, $1\leq j\leq k$.

\begin{lemma}\label{L:Goodwillielemma}[Lemma 2.7 of \cite{CalcII}]
With $\calX$ as above, choose a basepoint $x\in X_{\{k\}}$, and for $T\subset\underline{k-1}$ let $\calF(T)=\hofiber_x(X_T\to X_{T\cup \{k\}})$. Then the $(k-1)$-cube $\calF$ is $(-1+\sum_{j=1}^k d_j)$-cocartesian. That is, the pair $\left(\calF(\underline{k-1}),\cup_{j\in \underline{k-1}}\calF(\underline{k-1}-j)\right)$ is $(-1+\sum_{j=1}^k d_j)$-connected.
\end{lemma}

We learned an elementary proof of the Triad Connectivity Theorem (\refT{B-M} in the case of squares) from tom Dieck's book \cite{tD:AT}, who credits Puppe \cite{tDKP:H}. The main theme of this proof is subdivision, much like proofs of excision for homology. We adapt these ideas to prove \refL{Goodwillielemma} without use of transversality arguments. Our proof mirrors Goodwillie's, and we replace his ``dimension counting'' argument with a ``coordinate counting'' one. 


%
%


\section{Preliminaries}\label{S:prelims}


We first make a simplification in the hypotheses of \refL{Goodwillielemma}. If $d_j=-1$ for all $j$, then the conclusion of \refL{Goodwillielemma} is vacuously true. Without loss of generality $d_k\geq 0$. The basepoint $x\in X_{\{k\}}$ can be joined by a path to some point in $X_\emptyset$, so we may as well assume the basepoint lies in $X_\emptyset$ by the homotopy invariance of homotopy fibers over path components. If $d_i=-1$ for any other value of $i$, then $X_\emptyset\to X_{\{i\}}$ is the inclusion of $X_\emptyset$ to itself with a disjoint point added. This point plays no role in any of the homotopy fibers appearing in the cube $\calF$, and we may ignore it altogether. More precisely, for this value of $i$ and a basepoint $x\in X_\emptyset$, we have $\hofiber_x(X_T\to X_{T\cup\{k\}})=\hofiber_x(X_{T\setminus\{i\}}\to X_{T\setminus\{i\}\cup\{k\}})$ for all $T\subset\underline{k-1}$. Thus we may assume $d_j\geq0$ for all $1\leq j\leq k$. The remainder of this section is an adaptation of material from Section 6.9 of \cite{tD:AT}.

\begin{defin}
A \emph{cube} $W$ in $\R^n$ is a set of the form
$$
W=W(a,\delta,L)=\left\{x\in\R^n : a_i\leq x_i\leq a_i+\delta\mbox{ for }i\in L, x_i=a_i\mbox{ for }i\notin L\right\},
$$
where $a=(a_1,\ldots, a_n)\in\R^n$, $\delta>0$, and $L\subset\{1,\ldots, n\}$ (possibly empty). Define $\dim(W)=|L|$. The \emph{boundary} $\del W$ of $W$ is the set of all $x$ in $W$ such that $x_i=a_i$ or $x_i=a_i+\delta$ for at least one value of $i\in L$. The boundary $\del W$ is a union of \emph{faces}. A face of a cube is also a cube.
\end{defin}

\begin{defin}
With $W$ as above, for each $j=1$ to $k$ define
$$
K^{j,k}_{p}(W)=\left\{x\in W:\frac{\delta(j-1)}{k}+a_i<x_i<\frac{\delta j}{k}+a_i\mbox{ for at least $p$ values of $i\in L$}\right\}.
$$
\end{defin}

If $p\leq q$, then $K^{j,k}_q(W)\subset K^{j,k}_p(W)$. The following lemma gives the basic technical deformation result, with statement and proof a straightforward generalization of 6.9.1 of \cite{tD:AT}.

\begin{lemma}\label{L:tech-lemma}
Let $Y$ be a space with a subspace $A\subset Y$, $W$ a cube, $j,k$ positive integers with $j\leq k$, and $f:W\to Y$ a map. Suppose that for $p\leq\dim(W)$ we have
$$
f^{-1}(A)\cap W'\subset K^{j,k}_p(W')
$$
for all cubes $W'\subset \del W$. Then there exists a map $g:W\to Y$ homotopic to $f$ relative to $\del W$ such that 
$$
g^{-1}(A)\subset K^{j,k}_p(W).
$$
\end{lemma}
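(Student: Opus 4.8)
The plan is to build the homotopy $f \simeq g$ by pushing the ``bad set'' $f^{-1}(A)$ away from the complement of the middle slab $K^{j,k}_p(W)$ one coordinate at a time, using a self-map of the cube $W$ that is the identity on $\del W$ and that compresses the $i$-th coordinate interval toward its middle subinterval $[\frac{\delta(j-1)}{k}+a_i,\ \frac{\delta j}{k}+a_i]$. Concretely, for a single coordinate $i \in L$ I would choose a continuous surjection $\phi_i \colon [a_i, a_i+\delta] \to [a_i, a_i+\delta]$ that is the identity near the endpoints, and that maps a large closed sub-interval $J_i$ (with $[a_i,a_i+\delta]\setminus J_i$ arbitrarily small) \emph{into} the open middle third $(\frac{\delta(j-1)}{k}+a_i,\ \frac{\delta j}{k}+a_i)$; such a $\phi_i$ is clearly homotopic to the identity rel endpoints by a linear homotopy, hence $f \circ (\text{coordinatewise } \phi_i) \simeq f$ rel $\del W$.

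The key steps, in order, are as follows. First, I would record the one-coordinate construction above and check that the homotopy it produces is rel $\del W$: since $\phi_i$ fixes the endpoints $a_i$ and $a_i+\delta$, the product map $\Phi = (\phi_1,\dots)$ restricted to any face of $W$ in which the $i$-th coordinate is constant is unaffected, and on faces where it is not constant it is the identity in the limit, so one must take a little care and instead perform the compression only on a neighborhood of the ``deep interior'' in the remaining coordinates --- this is where the hypothesis $f^{-1}(A)\cap W' \subset K^{j,k}_p(W')$ for \emph{all} cubes $W'\subset\del W$ enters. Second, I would iterate over the coordinates $i\in L$: after compressing in coordinate $i$, the preimage of $A$ lies in the set where $x_i$ is in the open middle slab, up to a controlled error coming from $\del W$; the hypothesis on boundary faces guarantees the error near $\del W$ is already inside $K^{j,k}_p$. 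Third, after running through all $|L|=\dim(W)$ coordinates, the resulting map $g$ has $g^{-1}(A)$ contained in the set of points at least $p$ of whose coordinates lie in the open middle slab --- but in fact one gets the stronger statement that \emph{all} coordinates do, so a fortiori $g^{-1}(A)\subset K^{j,k}_p(W)$. Fourth, I would concatenate the finitely many homotopies (one per coordinate) into a single homotopy rel $\del W$.

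The main obstacle I anticipate is the bookkeeping at the boundary: a naive coordinatewise compression $\Phi$ is \emph{not} the identity on all of $\del W$ (it is the identity only on those faces where the compressed coordinate is constant), so one cannot simply take $g = f\circ\Phi$. The fix --- which is exactly the content of the hypothesis being phrased for all $W'\subset\del W$, mirroring $6.9.1$ of \cite{tD:AT} --- is to damp the compression to the identity as one approaches $\del W$, using a cutoff function supported away from the boundary, and then to verify that on the thin collar near $\del W$ the preimage $f^{-1}(A)$ is already where we want it (inside $K^{j,k}_p$) by the inductive/hypothesized control on each boundary face. Managing the interaction of these cutoffs across the successive coordinate compressions, and confirming that the ``at least $p$'' conclusion survives the damping near the boundary, is the delicate point; everything else is a routine adaptation of the subdivision argument in Section 6.9 of \cite{tD:AT}.
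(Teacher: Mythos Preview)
Your approach has a genuine gap: the direction of your self-map is backwards. If $\phi_i$ sends a large sub-interval $J_i$ \emph{into} the middle slab (a compression), then for $g=f\circ\Phi$ one has $g^{-1}(A)=\Phi^{-1}(f^{-1}(A))$, and there is no reason this preimage should land in the middle slab. Concretely, take $W=[0,1]$, $j=2$, $k=3$ (middle slab $(1/3,2/3)$), $p=1$, and $f^{-1}(A)=\{1/10\}$; the boundary hypothesis is satisfied since $f(0),f(1)\notin A$. Any continuous surjection $\phi_1$ that is the identity near the endpoints and maps $J_1$ into $(1/3,2/3)$ must still take the value $1/10$ somewhere, and since $\phi_1(J_1)\subset(1/3,2/3)$ while $1/10\notin(1/3,2/3)$, that preimage point lies outside $J_1$, hence (for $J_1$ large) outside $(1/3,2/3)$. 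Thus $g^{-1}(A)\not\subset K^{2,3}_1(W)$. What one actually needs is the opposite: a map that \emph{expands} the middle slab onto the whole interval, collapsing the complementary pieces to the endpoints, so that $\phi_i^{-1}$ of anything in the open interior lies in the middle slab.

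Even with that correction, your coordinatewise product $\Phi$ is not the identity on $\del W$, and the damping you propose is precisely the ``delicate point'' you leave open. The paper avoids all of this. Instead of a product of one-variable maps, it uses a single radial map $h\colon W\to W$: the linear expansion of the small cube $[\frac{j-1}{k},\frac{j}{k}]^n$ onto $[0,1]^n$ along rays from its center $c=(\frac{2j-1}{2k},\dots,\frac{2j-1}{2k})$, which collapses the shell between the two cube boundaries onto $\del W$. This $h$ is automatically the identity on $\del W$ and is homotopic to the identity rel $\del W$ by straight lines. For $g=f\circ h$: if $x$ lies in the open small cube, every coordinate of $x$ is already in the middle slab; otherwise $h(x)\in\del W$, so by hypothesis $h(x)\in K^{j,k}_p(W')$ for the relevant face $W'$, and since $x$ lies on the segment from $c$ (all of whose coordinates are in the middle slab) to $h(x)$, convexity of each slab interval forces $x\in K^{j,k}_p(W)$ as well. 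No damping, no iteration, no bookkeeping.
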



\begin{proof}
Without loss of generality $W=I^n$, $n\geq 1$. We will construct a map $h:I^n\to I^n$ homotopic to the identify and define $g$ to be the composition of $f$ with $h$. Let $x=\left(\frac{2j-1}{2k},\ldots, \frac{2j-1}{2k}\right)$ be the center of the cube $\left[\frac{j-1}{k},\frac{j}{k}\right]^n$. For a ray $y$ emanating from $x$, let $P(y)$ be its intersection with $\del\left[\frac{j-1}{k},\frac{j}{k}\right]^n$ and $Q(y)$ its intersection with $\del I^n$. Let $h$ map the segment from $P(y)$ to $Q(y)$ onto the point $Q(y)$ and the segment from $x$ to $P(y)$ affinely onto the segment from $x$ to $Q(y)$. Clearly $h$ is homotopic to the identity of $I^n$ relative to $\del I^n$, and so $g=f\circ h$ is homotopic to $f$ relative to $\del I^n$. It remains to check that $g$ satisfies the property in the conclusion of the lemma.

Suppose $z\in I^n$ and $g(z)\in A$. Write $z=(z_1,\ldots, z_n)$. If $z\in \left(\frac{j-1}{k},\frac{j}{k}\right)^n$, then $z\in K^{j,k}_n(W)\subset K^{j,k}_p(W)$ and we are done. Now assume there exists $i$ so that either $z_i\geq \frac{j}{k}$ or $z_i\leq \frac{j-1}{k}$. Then by definition of $h$, we have $h(z)\in\del I^n$, so $h(z)\in W'$ for some face $W'$ of dimension $n-1$. Since $g(z)=f(h(z))\in A$, $h(z)\in f^{-1}(A)$, and by assumption then $h(z)\in K^{j,k}_p(W')$. Thus for at least $p$ values of $i$, we have $\frac{j-1}{k}<h(z)_i<\frac{j}{k}$, where $h(z)_i$ denotes the $i$th coordinate of $h(z)$. By definition of $h$,
$$
h(z)_i=\frac{2j-1}{2k}+t\left(z_i-\frac{2j-1}{2k}\right)\quad\mbox{ for }t\geq 1.
$$
Inserting this expression into the previous inequalities and solving for $z_i$ yields
$$
-\frac{1}{t2k}+\frac{2j-1}{2k}<z_i<\frac{1}{t2k}+\frac{2j-1}{2k}.
$$
Since the lower bound increases with $t$ and the upper bound decreases with $t$, substituting $t=1$ into each gives
$$
\frac{j-1}{k}<z_i<\frac{j}{k}
$$
so that $z\in K^{j,k}_p(W)$.
\end{proof}

Suppose $Y$ is a space with open subsets $Y_\emptyset,Y_1,\ldots, Y_k$ such that $Y$ is the union of $Y_1,\ldots, Y_k$ along $Y_\emptyset$. Let $f:I^n\to Y$ be a map. By the Lebesgue Covering Lemma, we can decompose $I^n$ into cubes $W$ such that $f(W)\subset Y_j$ for some $j$ depending on $W$. The following is a generalization of Theorem 6.9.2 in \cite{tD:AT}, as is its proof.

\begin{thm}\label{T:BMpretheorem}
With the $Y_j$ and $f$ as above, assume that for each $j$, $(Y_j,Y_\emptyset)$ is $d_j$-connected, with $d_j\geq 0$ (i.e., $Y_\emptyset\to Y_j$ is $d_j$-connected).\bfn{$(X,A)$ is $k$-connected is same thing as $A\to X$ is $k$-connected.} Then there is a homotopy $f_t$ of $f$ such that
\begin{enumerate}
\item If $f(W)\subset Y_j$, then $f_t(W)\subset Y_j$ for all $t$,
\item If $f(W)\subset Y_\emptyset$, then $f_t(W)=f(W)$ for all $t$, and
\item If $f(W)\subset Y_j$, then $f_1^{-1}(Y_j\setminus Y_\emptyset)\cap W\subset K^{j,k}_{d_j+1}(W)$.
\end{enumerate}
\end{thm}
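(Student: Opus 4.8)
The plan is to build the homotopy $f_t$ one cube at a time, working upward through the skeleta of the cubical decomposition of $I^n$, in direct analogy with the proof of Theorem 6.9.2 in \cite{tD:AT}. First I would fix the decomposition of $I^n$ into cubes $W$ coming from the Lebesgue Covering Lemma, so that each $W$ carries a label $j=j(W)$ with $f(W)\subset Y_{j(W)}$ (choosing, say, the smallest such index, with $j=\emptyset$ preferred whenever $f(W)\subset Y_\emptyset$). I would process the cubes in order of increasing dimension, and within a fixed dimension the order does not matter since the modifications on a cube will be made relative to its boundary. The inductive hypothesis to carry along is: after treating all cubes of dimension $<m$, the current map sends each $W$ into $Y_{j(W)}$ (so (1) is maintained), is unchanged on every $W$ with $j(W)=\emptyset$ (so (2) is maintained), and on each already-treated cube $W'$ of dimension $<m$ satisfies $f^{-1}(Y_{j(W')}\setminus Y_\emptyset)\cap W'\subset K^{j(W'),k}_{d_{j(W')}+1}(W')$.

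For the inductive step, fix a cube $W$ of dimension $m$ with label $j=j(W)$. If $j=\emptyset$ there is nothing to do. Otherwise, every proper face $W'\subsetneq W$ was treated at an earlier stage; I would first note that if $W'$ has label $j(W')\ne j$ then $f(W')\subset Y_{j(W')}\cap Y_j$, and I need $f^{-1}(Y_j\setminus Y_\emptyset)\cap W'\subset K^{j,k}_{d_j+1}(W')$ — this requires knowing that points of $W'$ mapping outside $Y_\emptyset$ already lie in the appropriate shrunken region for the index $j$. The cleanest way to guarantee this is to observe that $K^{j',k}_p(W')$ and $K^{j,k}_{p'}(W')$ are for $j\ne j'$ disjoint sets of ``central'' coordinates in different coordinate slabs, so after the earlier treatment a point of $W'$ lying outside $Y_\emptyset$ has been pushed into $K^{j(W'),k}_{d_{j(W')}+1}(W')$, and I may further arrange (using that $Y_\emptyset\to Y_{j(W')}$ is $d_{j(W')}$-connected, hence at least $0$-connected, and by an additional small deformation on $W'$ supported away from its own boundary) that its image actually lies in $Y_\emptyset$ off of $K^{j,k}_{d_j+1}(W')$. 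Granting this, the hypothesis of \refL{tech-lemma} is satisfied with $Y=Y_j$, $A=Y_j\setminus Y_\emptyset$, and $p=d_j+1$ — here I use crucially that $f|_W$ maps into $Y_j$ and that $\dim W=m$, and that $d_j+1\le m+1$ is not needed, only $d_j+1\le\dim W$ in the relevant faces, which is where the connectivity of $(Y_j,Y_\emptyset)$ enters to allow compressing the map on lower skeleta. Applying \refL{tech-lemma} produces a map on $W$, homotopic rel $\del W$ to the current one, with $f^{-1}(Y_j\setminus Y_\emptyset)\cap W\subset K^{j,k}_{d_j+1}(W)$; since the homotopy is rel $\del W$ and stays inside $Y_j$, properties (1) and (2) and the earlier-stage conclusions are preserved. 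Concatenating these homotopies over all cubes (finitely many) gives the desired $f_t$.

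The main obstacle I expect is the bookkeeping at faces shared by cubes with different labels: reconciling the shrunken regions $K^{j(W'),k}_{\bullet}$ coming from the neighbor's index with the region $K^{j,k}_{\bullet}$ demanded by \refL{tech-lemma} for the current cube. This is exactly the point where one must use the $d_{j'}$-connectivity of $(Y_{j'},Y_\emptyset)$ to deform $f$ on lower-dimensional faces so that, off the central region, $f$ actually lands in $Y_\emptyset$ (not merely in $Y_{j'}$), so that a face can be ``$\emptyset$-like'' from the viewpoint of every other index simultaneously. In \cite{tD:AT} this is handled in the square case by an explicit induction on skeleta with a careful choice of which open set each cell is pushed into; the generalization is routine but notationally heavy, and I would organize it by strengthening the inductive hypothesis to: on each treated cube $W'$, $f(W'\setminus K^{j(W'),k}_{d_{j(W')}+1}(W'))\subset Y_\emptyset$. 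With that strengthening in place the verification of the hypotheses of \refL{tech-lemma} at each stage becomes immediate, and the rest is a finite concatenation of homotopies.
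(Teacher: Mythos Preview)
Your plan is essentially the paper's approach: the paper simply declares this a straightforward generalization of Theorem~6.9.2 in \cite{tD:AT}, and your skeleton-by-skeleton induction invoking \refL{tech-lemma} is exactly that generalization. However, the ``main obstacle'' you spend most of your effort on is not one. The phrase ``$Y$ is the union of $Y_1,\ldots,Y_k$ along $Y_\emptyset$'' means $Y_i\cap Y_j=Y_\emptyset$ for $i\neq j$ (as it does in tom Dieck's two-set version and in the application in \refS{proof}), so if $W'$ is a face of a cube $W$ with $j(W)=j\neq\emptyset$, then $f(W')\subset f(W)\subset Y_j$ forces $j(W')\in\{j,\emptyset\}$; two distinct nonzero labels on a cube and one of its faces cannot occur. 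Hence when you process $W$, each face $W'$ either already satisfies $f(W')\subset Y_\emptyset$ (so $f^{-1}(Y_j\setminus Y_\emptyset)\cap W'=\emptyset$) or was processed with the \emph{same} index $j$, and the hypothesis of \refL{tech-lemma} is immediate. Your proposed strengthening of the inductive hypothesis and the extra deformations reconciling $K^{j',k}_\bullet$ with $K^{j,k}_\bullet$ are unnecessary.

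The one point you should sharpen is where the connectivity hypothesis actually enters. \refL{tech-lemma} requires $p\leq\dim W$, so with $p=d_j+1$ it only applies when $\dim W\geq d_j+1$. For cubes $W$ with $j(W)=j$ and $\dim W\leq d_j$ you instead use that $(Y_j,Y_\emptyset)$ is $d_j$-connected: by induction on dimension $f(\partial W)\subset Y_\emptyset$, so $f|_W$ can be homotoped rel $\partial W$ inside $Y_j$ to a map into $Y_\emptyset$. With this two-case distinction (compress into $Y_\emptyset$ in low dimensions, apply \refL{tech-lemma} in high dimensions) made explicit, the induction is clean and no bookkeeping across different nonzero labels is needed.
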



\begin{proof}
Let $C^l$ be the union of cubes $W$ with $\dim(W)\leq l$. The homotopy $f_t$ is constructed inductively over $C^k\times I$. If $\dim(W)=0$, then if $f(W)\subset Y_\emptyset$, we simply let $f_t=f$, which achieves the second condition. Note that if $f(W)\subset Y_j\cap Y_i$ for $i\neq j$, then $f(W)\subset Y_\emptyset$. Hence we only need consider what happens if $f(W)\subset Y_j$ and $f(W)\not\subset Y_i$ for all $i\neq j$. In this case, if $f(W)\subset Y_j$ and $f(W)\not\subset Y_l$ for all $l\neq j$, then since $(Y_j,Y_\emptyset)$ is $d_j$-connected and $d_j\geq 0$, choose a path from $f(W)$ to some point in $Y_\emptyset$ and use this as the homotopy, so that $f_1(W)\subset Y_\emptyset$. Then clearly the first condition holds and so does the third (in this case, the third condition says the empty set is contained in $K^{j,k}_{d_j+1}(W)$). This proves the base case.

Since the inclusion $\del W\subset W$ is a cofibration for any cube $W$, we may extend this homotopy over all cubes $W$ so that the first and second conditions hold.\bfn{there is no obstruction; this is a standard extension problem. the only difference is one of the extensions is extending by a constant, but this is trivial} By induction suppose that $f$ has been changed by a homotopy satisfying all three conditions for cubes of dimension less than $l$, and let $W$ be a cube with $\dim(W)=l$. If $f(W)\subset Y_\emptyset$, we let $f_t=f$ as usual. If $f(W)\subset Y_j$ and $f(W)\not\subset Y_i$ for all $i\neq j$, then 
\begin{itemize}
\item if $\dim(W)=l\leq d_j$, then since $(Y_j,Y_\emptyset)$ is $d_j$-connected there is a homotopy $f_t$ of $f$ relative to $\del W$ such that $f_1(W)\subset Y_\emptyset$, and clearly the first and third conditions hold.
\item If $\dim(W)=l>d_j$, we employ \refL{tech-lemma}. Let $A=Y_j\setminus Y_\emptyset\subset Y_j$. By induction we have that, for all $W'\subset \del W$,
$$
f^{-1}(Y_j\setminus Y_\emptyset)\cap W'\subset K^{j,k}_l(W')\subset K^{j,k}_{d_j+1}(W'),
$$
and by \refL{tech-lemma}, there is a homotopy $f_t$ of $f$ relative to $\del W$ such that $f^{-1}_1(Y_j\setminus Y_\emptyset)\cap W\subset K^{j,k}_{d_j+1}(W)$.
\end{itemize}
\end{proof}


\section{Proof of \refL{Goodwillielemma}}\label{S:proof}


We need to convert the strongly cocartesian cube $\calX$ in the statement of \refL{Goodwillielemma} into one where the maps are inclusions of open sets in order to apply the previous results. For each $1\leq j\leq k$ and corresponding cell $e_j$ with attaching map $f_j:\del e_j\to X_\emptyset$, assume $e_j=D^{d_j+1}$, put $N_j=D^{d_j+1}-\{0\}$, and let $V_j$ be the interior of $D^{d_j+1}$. Define a $k$-cube $\calY=S\mapsto Y_S$ for $S\subset\underline{k}$ as follows. Let $U=X_\emptyset\cup_{j=1}^nN_j$. The inclusion $X_\emptyset\to U$ is a homotopy equivalence, and $U$ is open in $X_{\underline{k}}$. For $S\subset\underline{k}$, let $Y_S=U\cup_{j\in S} V_j$. Then $Y_S$ is open in $Y_{\underline{k}}=X_{\underline{k}}$ for each $S$, and the maps $Y_S\to Y_T$ for $S\subset T$ are the evident inclusions. The inclusion $X_S\to Y_S$ gives rise to a map of $k$-cubes $\calX\to \calY$ which is an equivalence for each $S$. Now we are ready to prove \refL{Goodwillielemma}.


\begin{proof}[Proof of \refL{Goodwillielemma}]
With $\calY=T\mapsto Y_T$ as above, choose a basepoint $y\in Y_\emptyset$, put $\calF'(T)=\hofiber_y(Y_T\to Y_{T\cup\{k\}})$ for $T\subset\underline{k-1}$, and let $C$ be the contractible space $C=\hofiber_y(Y_{\underline{k-1}}\to Y_{\underline{k-1}})$. As indicated in the paragraph preceding this proof, $\calF'(T)\simeq\calF(T)$. Following Goodwillie's proof of \refL{Goodwillielemma} and using 1.16(a) in \cite{CalcII}, it is enough to show that the cube $T\mapsto\calF^\ast(T)=\calF'(T)\cup C$ is $(-1+\sum_j d_j)$-cocartesian; that is, that the pair
$$
\left(A,B\right)=\left(\calF^\ast(\underline{k-1}),\cup_{j\in\underline{k-1}}\calF^\ast(\underline{k-1}-j)\right)
$$
is $(-1+\sum_jd_j)$-connected. Note that the conclusion is automatic if $d_j=0$ for all $j$, since any pair $(A,B)$ is $(-1)$-connected. Let $\phi:(I^n,\del I^n)\to (A,B)$ be a map. The map $\phi$ is adjoint to a map $\Phi:I^n\times I\to Y_{\underline{k}}$ with boundary conditions
\begin{enumerate}
\item[(B0)] $\Phi(z,0)=y\in Y_\emptyset$ is the basepoint for all $z\in I^n$,
\item[(B1)] $\Phi(z,1)\in\cup_{j\in \underline{k-1}}Y_j=Y_{\underline{k-1}}$ for all $z\in I^n$, and
\item[(B2)] For each $z\in \del I^n$ there exists $i(z)\in \underline{k}$ so that $\Phi(z,t)\in \cup_{j\neq i(z)} Y_j$ for all $t\in I$.
\end{enumerate}
We will make a homotopy of $\Phi$ preserving (B0)-(B2) such that the last condition holds for each $z\in I^n$. To do this we apply \refT{BMpretheorem} to $\Phi:I^n\times I\to Y_{\underline{k}}$ and obtain a decomposition of $I^n\times I$ into cubes $W$ such that for each $W$ there is some $j$ so that $\Phi(W)\subset Y_j$, and a homotopy $\Phi_r$ for $0\leq r\leq 1$ of $\Phi=\Phi_0$ such that 
\begin{enumerate}
\item $\Phi(W)\subset Y_j$ implies $\Phi_r(W)\subset Y_j$ for all $r$,
\item $\Phi(W)\subset Y_\emptyset$ implies $\Phi_r(W)=\Phi(W)$ for all $r$, and
\item $\Phi(W)\subset Y_j$ implies $\Phi_1^{-1}(Y_j\setminus Y_\emptyset)\cap W\subset K^{j,k}_{d_j+1}(W)$.
\end{enumerate}
First we prove that $\Phi_r$ satisfies (B0)-(B2) for all $r$. 
\begin{itemize}
\item[(B0)] Since $\Phi(z,0)=y\in Y_\emptyset$ is the basepoint for all $z\in I^n$, we have for all cubes $W\subset I^n\times\{0\}$ that $\Phi(W)=y$, and (2) above implies $\Phi_r(W)=\Phi(W)$ for all $r$, so that $\Phi_r(z,0)=y$ for all $r$.
\item[(B1)] Since $\Phi(z,1)\in\cup_{j\in\underline{k-1}}Y_j=Y_{\underline{k-1}}$ for all $z\in I^n$, then for all cubes $W\subset I^n\times\{1\}$, $\Phi(W)\subset Y_j$ for some $1\leq j\leq k-1$. Hence $\Phi_r(W)\subset Y_j$ as well by (1) above, and thus $\Phi_r(z,1)\subset Y_{\underline{k-1}}$ for all $r,z$.
\item[(B2)] We know that for each $z\in \del I^n$ there exists $i(z)\in\underline{k}$ so that $\Phi(\{z\}\times I)\subset\cup_{j\neq i(z)} Y_j$. Let $W_1,\ldots, W_h$ be cubes such that $\{z\}\times I\subset W_1\cup\cdots \cup W_h$ and so that each $W_a$ contains a point of the form $(z,t)$ for some $t$. Since $\Phi(\{z\}\times I)\subset\cup_{j\neq i(z)} Y_j$, for each $a=1$ to $h$ we must have $\Phi(W_a)\subset Y_{j(a)}$ for some $j(a)\neq i(z)$.\bfn{which would otherwise contradict the fact that $\Phi(z,t)\notin Y_s$} This implies $\Phi_r(W_1\cup\cdots\cup W_h)\subset Y_{j(1)}\cup\cdots\cup Y_{j(h)}\subset\cup_{j\neq i(z)}Y_j$ for all $r$, again by (1) above.
\end{itemize}
Now we show that $\Phi_1$ actually satisfies the stronger condition that for each $z\in I^n$ there exists $i(z)\in \underline{k}$ so that $\Phi_1(z,t)\in \cup_{j\neq i(z)} Y_j$ for all $t\in I$ when $n<\sum_jd_j$. Let $\pi:I^n\times I\to I^n$ be the projection. We claim that
\begin{equation}\label{intersection}
\bigcap_{j=1}^k\pi\left(\Phi_1^{-1}(Y_j\setminus Y_\emptyset)\right)=\emptyset
\end{equation}
if $n<\sum_jd_j$. Let $y\in\pi\left(\Phi_1^{-1}(Y_j\setminus Y_\emptyset)\right)$ for all $j$, so that $y$ is an element of this intersection. For each $j$, we may choose $t_j$ so that $(y,t_j)\in\Phi^{1-}(Y_j\setminus Y_{\emptyset})$, so that $y=\pi(y,t_j)$ and $w(j)=(y,t_j)\in W_j$ for some cube $W_j\subset I^n\times I$. Thus, for each $j$, $w(j)\in W_j\cap \Phi_1^{-1}(Y_j\setminus Y_\emptyset)\subset K^{j,k}_{d_j+1}$ by (3) of \refT{BMpretheorem}. This means $w(j)$ has at least $d_j+1$ coordinates $w(j)_i$ such that $a_i+\frac{\delta(j-1)}{k}<w(j)_i<a_i+\frac{\delta j}{k}$, where $W_j=W(a,\delta,L)$ and $a=(a_1,\ldots, a_{n+1})$. This implies that $y$ has at least $d_j$ coordinates $y_i$ satisfying the same bounds (only here the index $i$ ranges between $1$ and $n$). For each $j$, the projection $\pi(W_j)$ is a cube containing $y$, and subdividing further if necessary (\refT{BMpretheorem} clearly still applies to any such further subdivision) we may assume $\pi(W_j)=W$ for all $j$. Thus $y$ has at least $d_j$ coordinates $y_i$ satisfying $a_i+\frac{\delta(j-1)}{k}<y_i<a_i+\frac{\delta j}{k}$ for all $j$, which is impossible if $n<\sum_j d_j$, so that the intersection in Equation (\ref{intersection}) is indeed empty. Hence there is some $i(y)\in\underline{k}$ so that $y\notin\pi\left(\Phi_1^{-1}(Y_{i(y)}\setminus Y_\emptyset)\right)$. That is, for all $t$, $(y,t)\notin \Phi_1^{-1}(Y_{i(y)}\setminus Y_\emptyset)$, as required. 

When $n=0$, to show $\pi_0(B)\to \pi_0(A)$ is surjective our argument above requires $d_j\geq 1$ for at least one $j$. We have already noted near the beginning of the proof that the conclusion of the theorem was trivially true when $d_j=0$ for all $j$.

%
\end{proof}


\section{Acknowledgments}\label{S:Ack}


The author would like to thank the referee for a careful reading and thoughtful comments which  improved the exposition of this paper, and for pointing out a gap in the proof of \refL{Goodwillielemma}. This work was partially supported by ONR grants N0001412WX30191 and N0001413WX20992.

\bibliographystyle{amsplain}

\bibliography{/Users/brianmunson/Documents/TeX/Papers/Bibliography}

\end{document}